\newtheorem{lemma}{Lemma}[section]
\newtheorem{theorem}{Theorem}[section]
\newtheorem{remark}{Remark}
\def\rr{\mathbb{R}}
\def\eps{\epsilon}
\def\la{\langle}
\def\ra{\rangle}
\def\ov{\overline{v}}
\title[Asymptotics for nonlocal problems]{Asymptotics for nonlocal evolution problems by scaling arguments}
\author[T. I. Ignat ]{Tatiana I. Ignat }
\keywords{Nonlocal diffusion; large time behavior\\
\indent 2000 {\it Mathematics Subject Classification.} 
35B40, 45A05}
\address{Tatiana I. Ignat
\hfill\break\indent Institute of Mathematics ``Simion Stoilow'' of the Romanian Academy\\
\hfill\break\indent  21 Calea Grivitei Street \\010702 Bucharest \\ Romania }
 \email{{\tt tatiana.ignat@gmail.com} }
\begin{document}
\maketitle

\begin{abstract}
In this paper  we consider a nonlocal evolution problem and obtain by a scaling method the first term in the  
 asymptotic behavior of the solutions. The method employed treats in different way the smooth and the rough part of the solution.
\end{abstract}

\section{Introduction}

\label{Sect.intro}
\setcounter{equation}{0}

In this paper  we study a nonlocal equation of the form:
\begin{equation}
\label{11}
\left\{
\begin{array}{ll}
u_t(x,t) = \int_{\rr} J(x-y) (u(y,t)- u(x,t)) \, dy,& x \in \rr, \ t>0, \\[10pt]
u(x,0)=u_0 (x),& x\in \rr.
\end{array}
\right.
\end{equation}
We consider  $J:\rr\to\rr$ a nonnegative, smooth, even function rapidly decaying at inifinity, with $\int_\rr J(s)ds=1$ and the initial data $u_0\in L^1(\rr)\cap L^\infty(\rr)$.

Equations like \eqref{11} and variations of it, have been
recently widely used to model diffusion processes, for example, in
biology, dislocations dynamics, etc. For the interested reader we refer to \cite{MR1463804}, \cite{MR2120547}, \cite{MR1999098}, \cite{MR1608081} and the references therein.

In this paper we will obtain the first term in the asymptotic behavior of the solution of system \eqref{11} by using a scaling method.
The main result of this paper is the following one:
\begin{theorem} \label{teo1}
Let $u_0\in L^1 (\rr) \cap  L^\infty (\rr) $. For any $p\in [1,\infty]$
the solution $u(x,t)$ of  equation \eqref{11} satisfies:  
\begin{equation}\label{decay.u}
\lim_{t \to \infty } t^{\frac 12(1-\frac 1p)}{ \| u(t) - MG_{At}\|_{L^p(\rr)} }=0
\end{equation}
where $$G_t(x)=\frac{1}{\sqrt{4\pi t}}\exp{(-\frac{x^2}{4t})}$$
is the heat kernel and  
$$M=\int_{\rr} u_0(x)dx, \quad A=\frac{1}{2}\int_\rr J(z)z^2dz.$$
\end{theorem}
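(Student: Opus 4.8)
The plan is to translate \eqref{decay.u} into a convergence statement for a parabolically rescaled solution and then pass to the limit in the rescaled equation, identifying the limit by uniqueness for the heat semigroup. For $\lambda>0$ I set $u_\lambda(x,t)=\lambda^{1/2}u(\lambda^{1/2}x,\lambda t)$. A change of variables, together with the identity $MG_A(x)=\lambda^{1/2}MG_{A\lambda}(\lambda^{1/2}x)$, gives
\[
\|u_\lambda(\cdot,1)-MG_A\|_{L^p(\rr)}=\lambda^{\frac12(1-\frac1p)}\,\|u(\cdot,\lambda)-MG_{A\lambda}\|_{L^p(\rr)},
\]
so that \eqref{decay.u} is \emph{equivalent} to $u_\lambda(\cdot,1)\to MG_A$ in $L^p(\rr)$ as $\lambda\to\infty$. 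The same substitution turns \eqref{11} into $\partial_t u_\lambda=\mathcal{L}_\lambda u_\lambda$ with
\[
\mathcal{L}_\lambda\phi(x)=\lambda\int_\rr J(z)\big(\phi(x-\lambda^{-1/2}z)-\phi(x)\big)\,dz ,
\]
and a Taylor expansion using $\int_\rr J(z)z\,dz=0$ and $\tfrac12\int_\rr J(z)z^2\,dz=A$ shows $\mathcal{L}_\lambda\phi\to A\phi''$ for smooth $\phi$. Since $u_\lambda(\cdot,0)=\lambda^{1/2}u_0(\lambda^{1/2}\cdot)$ has mass $M$ and concentrates at the origin, the natural limit problem is $v_t=Av_{xx}$ with datum $M\delta_0$, whose solution at $t=1$ is exactly the target profile $MG_A$.

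To cope with the low regularity of $u_0$ and of the concentrating data, I would write \eqref{11} as $u_t=J*u-u$ and expand the semigroup,
\[
u(t)=e^{-t}u_0+v(t),\qquad v(t)=e^{-t}\sum_{k\ge1}\frac{t^k}{k!}\,J^{*k}*u_0 .
\]
The rough part $e^{-t}u_0$ retains the singularities of the datum but decays exponentially: after rescaling its $L^p$ norm equals $e^{-\lambda}\lambda^{\frac12(1-\frac1p)}\|u_0\|_{L^p(\rr)}\to0$, so it is asymptotically negligible for every $p$. The smooth part $v(t)$, regularised by the convolutions $J^{*k}$, carries the entire leading-order behaviour, and it is on $v$ that the scaling argument is actually run.

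For the rescaled smooth part $v_\lambda(x,t)=\lambda^{1/2}v(\lambda^{1/2}x,\lambda t)$ I would establish uniform $L^1\cap L^\infty$ bounds from conservation of mass and the $L^1$--$L^\infty$ decay of \eqref{11}, and extract space-time equicontinuity on $\{t\ge\delta>0\}$ from the smoothing by $J^{*k}$ and from the equation. Any subsequential limit $v_\infty$ then solves $v_t=Av_{xx}$ (using $\mathcal{L}_\lambda\to A\partial_{xx}$ in the weak formulation), with conserved mass $M$ concentrated at $x=0$, hence $v_\infty(\cdot,0)=M\delta_0$; by uniqueness $v_\infty=MG_{At}$, and since the limit does not depend on the subsequence, the whole family converges, giving $v_\lambda(\cdot,1)\to MG_A$.

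The main obstacle will be upgrading this convergence to the \emph{strong} $L^p$ topology for all $p\in[1,\infty]$, including the endpoints. Compactness only yields convergence on compact sets; reaching $p=1$ requires tightness of $\{v_\lambda(\cdot,1)\}$ (no mass escaping to infinity), where the rapid decay of $J$ and the integrability of $u_0$ enter, while reaching $p=\infty$ requires genuinely uniform convergence of the rescaled smooth part. I would therefore prove the statement first for $p=1$ and for $p=\infty$ and interpolate for the intermediate exponents; the delicate point—controlling the concentrating initial layer near $t=0$ uniformly in $\lambda$—is precisely where the separate treatment of the smooth and rough parts is essential.
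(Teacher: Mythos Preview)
Your proposal is correct and follows essentially the same route as the paper: the decomposition $u=e^{-t}u_0+v$, the parabolic rescaling of the smooth part $v$, compactness of $\{v_\lambda\}$ away from $t=0$, identification of the limit as the unique solution of $\ov_t=A\ov_{xx}$ with datum $M\delta_0$, and finally interpolation to reach general $p$. The only cosmetic differences are your choice of scaling parameter ($\lambda^{1/2}$ versus the paper's $\lambda$), your series representation of $v$ in place of the paper's Fourier kernel $K_t$, and that the paper obtains the $L^p$ case by interpolating the $L^1$ convergence against a uniform $L^\infty$ \emph{bound} rather than proving the $L^\infty$ endpoint separately.
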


Similar results have been obtained in \cite{MR2257732} and \cite{MR2460931} by using different methods, under various assumptions on the regularity of the initial data $u_0$ and on $J$. The goal of this paper is to prove that the asymptotic behavior of the nonlocal evolution problems of type \eqref{11} can be analyzed by scaling arguments even if the equation does not support a self-similar solution due to the lack of homogeneity of the kernel $J$.

The main difficulty in applying scaling arguments in nonlocal problems is the lack of smoothness of the solution.  As observed in \cite{MR2257732}, the solution at any positive time is as smooth as the initial data is.  More precisely the 
 solution of  equation \eqref{11} can be written as 
\begin{equation}\label{decompositon}
u(x,t)=e^{-t}u_0(x)+v(x,t),
\end{equation}
 where $v$ is the smooth part of the solution while $e^{-t}u_0$ remains as smooth as the initial data is.
 By a simple computation, it follows that $v(x,t)$ verifies the equation:
\begin{equation}
\label{12}
\left\{
\begin{array}{ll}
v_t(x,t) = e^{-t}(J*u_0)(x)+(J*v-v)(x,t),& x \in \rr, \ t>0, \\[10pt]
v(x,0)=0 ,& x\in \rr.
\end{array}
\right.
\end{equation}

 The key point in using the scaling method to analyze the nonlocal model considered here
 is to apply  this method to the regular part of the solution $v$.
To obtain the decay in  Theorem \ref{teo1} we will prove a similar asymptotic behavior  for $v$:
\begin{equation}\label{limit.v}
 \lim_{t\to \infty}t^{1/2(1-1/p)}\| v(t)-MG_{At}\|_{L^p(\rr)}=0.
\end{equation}

To fix the ideas, for $v(x,t)$  solution of  problem \eqref{12} we define a family of functions $\{v_\lambda\}_{\lambda>0}$ as follows: $$v_\lambda(x,t)=\lambda v(\lambda x,\lambda^2t), \quad x\in \rr, t\geq 0.$$
In order to obtain this asymptotic behavior of $v$ we will prove that, at the time $t=1$ the rescaled family $v_\lambda(1)$ strongly converges as $\lambda\rightarrow\infty$ in the $L^p(\rr)$-norm to the solution of the heat equation, $\overline{v}_t=A\overline{v}_{xx}$ with $M\delta _0$ initial data, i.e. $MG_A$. To do that we prove that for any $0<t_1<t_2<\infty$ 
the sequence $\{v_\lambda\}$ is relatively compact on $C([t_1,t_2],L^1(\rr))$ and  that the limit point is the solution of the heat equation.

When we rescale function $v$ in fact we can write a similar scaling for $u$ with the difference that for the new family $\{u_\lambda\}$ we will not be able to prove the compactness (by the lack of regularity with respect to the initial data). Our method  not only rescale the solution but also the initial data. The limit of the rescaled solutions $u_\lambda$ when the initial data remains unchanged, i.e. the hyperbolic-parabolic relaxation limit,  has been considered in \cite[Ch.~1, p.~23]{1214.45002}.

In the context of classical diffusion problems, linear or nonlinear, the scaling method has been successfully applied. We cite here just a few references \cite{0762.35011}, \cite{MR1233647}, \cite{MR2739418}. This paper shows that the  nonlocal evolution problems involving operators as in \eqref{11}, where the smoothing effect is not present, could be treated by means of scaling methods. The extension of the method to nonlinear models as the ones analyzed in \cite{MR2356418,MR2888353} remains open.  However, the main difficulty in the context of the nonlinear problems will be to separate the smooth and rough parts of the solutions, an argument that is immediate in the case of linear problems. We recall  that there are cases when nonlinearity can help. We recall here the results in \cite{MR2138795} where a simplified model for radiating gases has been analyzed. The asymptotic profile is obtained there by using some Oleinik type estimates which are not available here.

We have considered here the case when $J$ is a smooth function rapidly decaying at infinity. In fact more general kernels can be considered. Essentially, 
as observed in \cite{MR2460931} we need the following assumptions on $J$:
\begin{equation}\label{hyp1}
\hat J(\xi)=1-A\xi^2+o(\xi^2) \quad \text{as}\, \xi\rightarrow 0
\end{equation}
 and for some $m>2$
 \begin{equation}\label{hyp2}
|\hat J(\xi)|\leq \frac {C}{|\xi|^m} \quad \text{as}\, \xi\rightarrow \infty.
\end{equation}
Obviously when $J$ is an even function and has decay faster than $1/|x|^2$ at infinity, i.e. $J\in L^1(1+|x|^2)$ for example, the first hypothesis \eqref{hyp1} is satisfied with 
$$A=\frac 12\int _{\rr} |x|^2 J(x)dx.$$
 Condition \eqref{hyp2} holds for example when $J$ has at least three derivatives in $L^1(\rr)$.
These restrictions are assumed in order to prove decay properties for the solution $v$ of system \eqref{12} and its derivative 
 in the $L^p$-norms for all $2\leq p\leq \infty$. If we only need to have estimates in the $L^2(\rr)$ norm then only $m>3/2$ is needed (see carefully the proof of Lemma 1.16 in \cite{1214.45002}).
This happens if $J$ is of class $W^{2,1}(\rr)\cap L^1(1+|x|^2)$.

 We recall here that in order to obtain $L^1-L^2$ estimates for $u$, a solution of \eqref{11}, and thus for $v$ solution of \eqref{12}, only $J\in L^1(1+|x|^2)$ is sufficient as proved by energy methods in \cite{MR2542582}, \cite{MR2103702}. The obtention of all the estimates involved in the proof by using energy methods (see \cite[Ch.~1, p.~25]{MR2656972}) for the case of the heat equation) remain to be analyzed.

\section{proof of main results}



 We first recall some preliminary results that  will help us during the proof.

We point out that as long as the initial data $u_0$ is nonnegative, $v$ a solution of system \eqref{12}  is a supersolution for system \eqref{11} with initial data identically zero. Then $v$ is nonnegative since the comparison principle holds (see \cite[Ch.~2, p.~37]{1214.45002}). We will consider here, without loss of generality, the case of nonnegative initial data $u_0$, so nonnegative solutions.

 The following lemma shows that \eqref{limit.v} is equivalent with 
the strong convergence of the sequence $\{v_\lambda(t_0)\}$ toward the heat kernel at the time $t_0$ multiplied by the mass of the initial data $MG_{t_0}.$
\begin{lemma}\label{teo2}
Let $p\in [1,\infty]$. The following statements are equivalent:
\begin{equation}\label{eq1} 
 \lim_{t\to \infty}t^{\frac 12(1-\frac 1p)}\| v(t)-MG_t\|_{L^p(\rr)}=0
 \end{equation}
and  
  \begin{equation}\label{eq2}
  v_\lambda(1)\to MG_{1} 
  \end{equation}
in the $L^p(\rr)$-norm as  $ \lambda\to \infty$, i.e. 
$$\lim_{\lambda\to\infty}\|v_\lambda(1)- MG_{1} \|_{L^p(\rr)}= 0.$$

\end{lemma}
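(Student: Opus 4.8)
The plan is to reduce the asserted equivalence to a bare change of variables, exploiting the fact that the Gaussian is a fixed point of the parabolic scaling used to define $v_\lambda$. First I would record the self-similar identity for the heat kernel: since $G_t(x)=t^{-1/2}G_1(x/\sqrt t)$, a one-line computation gives
\begin{equation*}
\lambda\, G_{\lambda^2}(\lambda x)=\lambda\cdot\frac{1}{\sqrt{4\pi\lambda^2}}\exp\Big(-\frac{(\lambda x)^2}{4\lambda^2}\Big)=\frac{1}{\sqrt{4\pi}}\exp\Big(-\frac{x^2}{4}\Big)=G_1(x),
\end{equation*}
so that $MG_1$ is exactly the image of $MG_{\lambda^2}$ under the map $f\mapsto \lambda f(\lambda\,\cdot)$. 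This is the structural reason the two normalizations in \eqref{eq1} and \eqref{eq2} are compatible.

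Next I would compute the difference appearing in \eqref{eq2}. Writing $v_\lambda(x,1)=\lambda v(\lambda x,\lambda^2)$ and using the identity above to replace $MG_1(x)$ by $\lambda\, M G_{\lambda^2}(\lambda x)$, I obtain
\begin{equation*}
v_\lambda(1)(x)-MG_1(x)=\lambda\,\big[\,v(\lambda x,\lambda^2)-M G_{\lambda^2}(\lambda x)\,\big].
\end{equation*}
Setting $w_\lambda(y):=v(y,\lambda^2)-MG_{\lambda^2}(y)$, the right-hand side is simply $\lambda\, w_\lambda(\lambda x)$, so the whole matter reduces to tracking how the $L^p$-norm of a function transforms under the dilation $y\mapsto \lambda y$ together with the prefactor $\lambda$.

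The key step is then this $L^p$ scaling. For $p<\infty$, the change of variable $y=\lambda x$ yields $\|\lambda\, w_\lambda(\lambda\,\cdot)\|_{L^p}^p=\lambda^{p}\,\lambda^{-1}\|w_\lambda\|_{L^p}^p=\lambda^{p-1}\|w_\lambda\|_{L^p}^p$, hence
\begin{equation*}
\|v_\lambda(1)-MG_1\|_{L^p(\rr)}=\lambda^{1-\frac1p}\,\|v(\lambda^2)-MG_{\lambda^2}\|_{L^p(\rr)},
\end{equation*}
and the endpoint $p=\infty$ gives the prefactor $\lambda$, which agrees with $\lambda^{1-1/p}$ at $1/p=0$. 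Substituting $t=\lambda^2$, so that $\lambda^{1-\frac1p}=t^{\frac12(1-\frac1p)}$ and $\|v(\lambda^2)-MG_{\lambda^2}\|_{L^p}=\|v(t)-MG_t\|_{L^p}$, shows that the quantity in \eqref{eq2} is literally equal to the quantity in \eqref{eq1} evaluated at $t=\lambda^2$.

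Finally, since $t=\lambda^2\to\infty$ exactly when $\lambda\to\infty$, the two one-parameter families take identical values along the substitution, so one limit is zero if and only if the other is, which is the claimed equivalence. I do not expect a genuine obstacle here: the statement is in essence a bookkeeping identity, and the only points requiring care are the verification that the Gaussian really is scaling-invariant in the form above and the handling of the $p=\infty$ endpoint, so that the exponent $\tfrac12(1-\tfrac1p)$ matches the Jacobian of the rescaling in every case $p\in[1,\infty]$.
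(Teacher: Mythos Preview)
Your argument is correct and coincides with the paper's own proof: both amount to the single scaling identity
\[
\|v_\lambda(1)-MG_1\|_{L^p(\rr)}=\lambda^{1-\frac1p}\,\|v(\lambda^2)-MG_{\lambda^2}\|_{L^p(\rr)}=t^{\frac12(1-\frac1p)}\|v(t)-MG_t\|_{L^p(\rr)}\quad (t=\lambda^2),
\]
obtained from the self-similarity of the Gaussian and the change of variable $y=\lambda x$. You are simply more explicit than the paper about the $p=\infty$ endpoint and the Gaussian invariance, but there is no difference in method.
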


\begin{proof}[Proof of Lemma \ref{teo2}]
Observe that at time $t_0=1$ the rescaled solution  $v_\lambda$ satisfies
\begin{align*}
\|v_\lambda(x,1) &- MG_{A}(x) \|_{L^p(\rr)}=
\|\lambda v(\lambda x,\lambda^2)-M\lambda G_{\lambda^2A}(\lambda x) \|_{L^p(\rr)}\\
&=
\lambda^{1-1/p} \|v(x,\lambda^2)-M G_{\lambda^2A}(x) \|_{L^p(\rr)}=
t^{1/2(1-1/p)}\|v(x,t)-MG_{At}(x) \|_{L^p(\rr)},
\end{align*}
where $t=\lambda^2.$ Then \eqref{eq1} holds if and only if \eqref{eq2} holds. 
\end{proof}

%
%
 
 In the following we prove 
 \begin{equation}\label{est.noua}
\lim_{\lambda\to\infty}\|v_\lambda(1)- MG_{A} \|_{L^1(\rr)}= 0.
\end{equation}
The proof is divided into four steps. We mainly follow the ideas  of \cite{MR2739418}. In  Step I we obtain estimates on $v_\lambda$ and its derivative. 
In Step II, using the Aubin-Lions compactness principle  (see for example \cite{MR916688}) we prove that $v_\lambda$ strongly converges to a function $\ov$ in $C([t_1,t_2],L^1_{loc}(\rr)).$  We then improve the convergence to $C([t_1,t_2],L^1(\rr)).$ 
In Step III we finish the proof of \eqref{est.noua} by  showing that any limit point $v$ satisfies  the heat equation with $M\delta_0$ as initial data. Since the limit point is unique then  the family $\{v_\lambda\}_{\lambda>0}$ converges to that limit. We then use \eqref{decompositon} to prove the result stated in Theorem \ref{teo1}.

 Before starting the proof of the main result let us recall that  the smooth part $v$ can be written as
$$v(x,t)=K_t\ast \varphi$$ where
\begin{equation}\label{2}
K_t(x)=\int_{\rr}(e^{t(\hat J(\xi)-1)}-e^{-t})e^{ix\xi}d\xi
\end{equation}
or in terms of the Fourier transform
\begin{equation}\label{3}
\hat{K_t}(\xi)=e^{t(\hat J(\xi)-1)}-e^{-t}.
\end{equation}
 Moreover, the rescaled solutions $\{v_\lambda\}$ satisfy the following system
 \begin{equation}
\label{v_t}
\left\{
\begin{array}{ll}
(v_\lambda)_t=\lambda^2e^{-\lambda^2t}(J_\lambda*u_{0\lambda})+\lambda^2(J_\lambda*v_\lambda-v_\lambda),& x\in\rr, t>0, \\ [10pt]
v_\lambda(x,0)=0, &x\in\rr.
\end{array}
\right.
\end{equation}
 
 Observe that the $L^1(\rr)$-norm of the nonnegative solution $v_\lambda$ is uniformly bounded by the mass of the initial data:
 \begin{equation}\label{mass}
\int _{\rr}v_\lambda (t,x)dx=(1-e^{-\lambda^2 t})\int _{\rr}u_0(x)dx.
\end{equation}

\textbf{ Step I. Estimates for $v_\lambda$.}
We  estimate the $L^p(\rr)$-norm, $p\geq 2$ of $v_\lambda$ and $(v_\lambda)_x$. Similar estimates could be obtained for $p\in[1,2)$  under stronger assumptions on function $J$ (see \cite{MR2460931}). We point out that if only the case  $p=2$ is needed then we only need to assume hypotesis \eqref{hyp2}
 with $m>1/2$ in Lemma \ref{lemma1} and $m>3/2$ in Lemma \ref{lemma2} below.

\begin{lemma}\label{lemma1}
For any  $p \in[2,\infty]$ there exists a positive constant $C(p,J)$ such that:
$$\|v_\lambda(t)\|_{L^p(\rr)}\leq C(p,J)t^{{-\frac{1}{2}}(1-\frac{1}{p})}\|u_0\|_{L^1(\rr)}$$
for any $t>0$ and any $\lambda>0$.
\end{lemma}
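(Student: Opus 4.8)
The plan is to reduce the $\lambda$-uniform bound to a single estimate on the unscaled solution $v$, and then to a Fourier-side bound on the kernel $K_t$. From the definition $v_\lambda(x,t)=\lambda v(\lambda x,\lambda^2 t)$, a change of variables gives $\|v_\lambda(t)\|_{L^p(\rr)}=\lambda^{1-\frac1p}\|v(\lambda^2 t)\|_{L^p(\rr)}$; substituting $s=\lambda^2 t$, the prefactor $\lambda^{1-\frac1p}$ cancels the $\lambda$-dependence of $(\lambda^2 t)^{-\frac12(1-\frac1p)}$, so the claimed inequality holds for every $\lambda$ as soon as $\|v(s)\|_{L^p(\rr)}\le C(p,J)\,s^{-\frac12(1-\frac1p)}\|u_0\|_{L^1(\rr)}$ for all $s>0$. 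Thus the estimate is scale invariant and it suffices to treat the unscaled solution.

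Since $\hat v(\xi,t)=\hat K_t(\xi)\hat u_0(\xi)$ by \eqref{2}--\eqref{3}, we have $v(t)=K_t\ast u_0$, and Young's inequality yields $\|v(t)\|_{L^p(\rr)}\le\|K_t\|_{L^p(\rr)}\|u_0\|_{L^1(\rr)}$. Hence the statement reduces to the kernel bound $\|K_t\|_{L^p(\rr)}\le C\,t^{-\frac12(1-\frac1p)}$. For $p\in[2,\infty]$ I would invoke the Hausdorff--Young inequality, $\|K_t\|_{L^p(\rr)}\le C\|\hat K_t\|_{L^{p'}(\rr)}$ with $\frac1p+\frac1{p'}=1$; since $1-\frac1p=\frac1{p'}$, the goal becomes $\|\hat K_t\|_{L^{p'}(\rr)}\le C\,t^{-\frac{1}{2p'}}$. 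Alternatively one can prove the endpoints $\|K_t\|_{L^2}\le Ct^{-1/4}$ via Plancherel and $\|K_t\|_{L^\infty}\le C\|\hat K_t\|_{L^1}\le Ct^{-1/2}$, and interpolate; the interpolated exponent is exactly $-\frac12(1-\frac1p)$.

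The heart of the matter is the frequency-split estimate of $\|\hat K_t\|_{L^{p'}}$, using $\hat K_t(\xi)=e^{-t}(e^{t\hat J(\xi)}-1)$. On a low-frequency ball $|\xi|\le\delta$, hypothesis \eqref{hyp1} gives $\hat J(\xi)-1\le-\frac A2\xi^2$, hence $0\le\hat K_t(\xi)\le e^{-\frac A2 t\xi^2}$, and the Gaussian integral over this ball produces the leading term of order $t^{-\frac{1}{2p'}}$. On an intermediate annulus $\delta\le|\xi|\le R$ the nondegeneracy $\hat J(\xi)<1$ (which follows from $J\ge0$, $\int_\rr J=1$ and $J$ continuous) gives $\hat J(\xi)-1\le-\eta<0$ uniformly, so $|\hat K_t(\xi)|\le Ce^{-\eta t}$ and this region is exponentially small. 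On the high-frequency tail $|\xi|\ge R$, choosing $R$ so large that $|\hat J|\le\frac12$ there, I would use $|\hat K_t(\xi)|=e^{-t}|e^{t\hat J(\xi)}-1|\le t\,e^{-t/2}|\hat J(\xi)|$ together with the decay \eqref{hyp2} to bound $\int_{|\xi|\ge R}|\hat J(\xi)|^{p'}\,d\xi<\infty$, again giving an exponentially decaying contribution.

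I expect the high-frequency tail to be the only genuine obstacle, and it is exactly where peeling off the rough part $e^{-t}u_0$ is essential: the subtraction of $e^{-t}$ is what makes $\hat K_t$ decay as $|\xi|\to\infty$ (since $e^{t\hat J(\xi)}-1\to0$), so that $K_t$ lies in the relevant $L^p$ space at all. Tracking the smallest $m$ for which $\int_{|\xi|\ge R}|\hat J|^{p'}$ converges recovers the thresholds noted in the text ($m>1/2$ at the endpoint $p=2$, a stronger condition for the full range up to $p=\infty$). Collecting the three regions gives $\|\hat K_t\|_{L^{p'}}\le C\,t^{-\frac{1}{2p'}}$, and reversing the reductions above proves the lemma.
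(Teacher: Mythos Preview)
Your proof is correct and follows exactly the paper's route: reduce by scaling to the unscaled estimate $\|v(s)\|_{L^p}\le C s^{-\frac12(1-\frac1p)}\|u_0\|_{L^1}$, write $v(t)=K_t\ast u_0$, and apply Young's inequality together with the kernel bound $\|K_t\|_{L^p}\le C t^{-\frac12(1-\frac1p)}$. The only difference is that the paper simply quotes this kernel bound from \cite{MR2460931}, whereas you supply a self-contained Hausdorff--Young / frequency-splitting argument for it; your sketch is sound and indeed recovers the threshold $m>1/2$ for $p=2$ that the paper mentions in the paragraph preceding the lemma.
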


\begin{remark}
We emphasize that Lemma \ref{lemma1} can be proved under weaker assumptions on $J$ as in \cite{MR2542582}, \cite{MR2103702}. Essentially $J\in L^1(1+|x|^2)$ is enough to obtain bounds for $u$ solution of \eqref{11}, so for $v$ and $v_\lambda$. We point out that we do not know if the energy methods (see \cite[Ch.~1, p.~25]{MR2656972}) that work in the classical heat equation  to establish a bound of the type $\|u_x(t)\|_{L^2}\lesssim t^{-1/4}\|\varphi\|_{L^1}$ can be adapted to the nonlocal setting to obtain similar estimates for $v$ and then for $v_\lambda$. 
\end{remark}

\begin{proof}
Using the definition of $v_\lambda$ we have 
$$\|v_\lambda (t)\|_{L^p(\rr)}=\lambda ^{1-\frac 1p}\|v(\lambda ^2t)\|_{L^p(\rr)}.$$
It is then sufficient to prove the same estimate for $v$.
Using the results in \cite{MR2460931}, under the hypotheses \eqref{hyp1} and \eqref{hyp2} the kernel $K_t$ defined by \eqref{2} satisfies
$$\|K_t\|_{L^p(\rr)}\leq C(p,J)t^{-\frac{1}{2}(1-\frac{1}{p})}.$$
Therefore
$$\|v(t)\|_{L^p(\rr)}\leq C(p,J)t^{-\frac{1}{2}(1-\frac{1}{p})}\|u_0\|_{L^1(\rr)}$$
and the proof of the Lemma is finished.
\end{proof}

\begin{lemma}\label{lemma2}

For each  $p \in[2,\infty]$ there exists a positive constant C such that:
$$\|(v_\lambda(t))_x\|_{L^p(\rr)}\leq Ct^{{-\frac{1}{2}}(1-\frac{1}{p})-\frac{1}{2}}\|u_0\|_{L^1(\rr)}$$
for any $t>0$ and any $\lambda>0$.

\end{lemma}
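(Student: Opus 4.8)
The statement to prove is the gradient bound
$$\|(v_\lambda(t))_x\|_{L^p(\rr)}\leq Ct^{-\frac{1}{2}(1-\frac{1}{p})-\frac{1}{2}}\|u_0\|_{L^1(\rr)}$$
for $p\in[2,\infty]$. Let me sketch my approach.

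The plan is to mirror the structure of Lemma 2.3 exactly, reducing from the rescaled family to the single function $v$ and then invoking the convolution representation $v=K_t\ast\varphi$. First I would use the scaling identity. Since $v_\lambda(x,t)=\lambda v(\lambda x,\lambda^2 t)$, differentiating in $x$ gives $(v_\lambda)_x(x,t)=\lambda^2 v_x(\lambda x,\lambda^2 t)$, so that taking the $L^p$-norm and changing variables yields
$$\|(v_\lambda(t))_x\|_{L^p(\rr)}=\lambda^{2-\frac 1p}\|v_x(\lambda^2 t)\|_{L^p(\rr)}.$$
This reduces everything to proving the estimate $\|v_x(t)\|_{L^p(\rr)}\le C t^{-\frac12(1-\frac1p)-\frac12}\|u_0\|_{L^1(\rr)}$ for $v$ itself, since substituting $s=\lambda^2 t$ recovers the stated power of $t$ with the correct homogeneity in $\lambda$.

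Next I would pass to the kernel. From $v=K_t\ast\varphi$ we have $v_x=(K_t)_x\ast\varphi$, so Young's inequality gives $\|v_x(t)\|_{L^p}\le\|(K_t)_x\|_{L^p}\|\varphi\|_{L^1}$, and since $\varphi$ here is built from $u_0$ (indeed $\|\varphi\|_{L^1}\lesssim\|u_0\|_{L^1}$), it suffices to establish the kernel derivative bound
$$\|(K_t)_x\|_{L^p(\rr)}\leq C(p,J)\,t^{-\frac{1}{2}(1-\frac{1}{p})-\frac{1}{2}}.$$
On the Fourier side, from \eqref{3} we get $\widehat{(K_t)_x}(\xi)=i\xi\bigl(e^{t(\hat J(\xi)-1)}-e^{-t}\bigr)$, and the extra factor of $\xi$ is precisely what produces the additional $t^{-1/2}$ decay over Lemma 2.3. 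The estimate for $p=2$ follows directly from Plancherel by splitting the frequency integral into $|\xi|\le 1$ and $|\xi|>1$: near the origin hypothesis \eqref{hyp1} gives $\hat J(\xi)-1\approx -A\xi^2$, so the Gaussian-type decay of $e^{-tA\xi^2}$ multiplied by $\xi^2$ integrates to the right power of $t$, while for large $\xi$ the decay assumption \eqref{hyp2} with $m>3/2$ (as flagged in the text) controls the tail. The $L^\infty$ bound comes from estimating $\|(K_t)_x\|_{L^\infty}\le\|\widehat{(K_t)_x}\|_{L^1}$ by the same frequency splitting, and the intermediate range $p\in(2,\infty)$ follows by interpolation between the $p=2$ and $p=\infty$ endpoints.

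The main obstacle I anticipate is the large-frequency tail estimate rather than the near-origin analysis. The low-frequency part is genuinely heat-kernel-like and behaves as expected, but the factor $i\xi$ worsens the integrability at infinity, which is exactly why the paper notes that $m>3/2$ is required for this lemma whereas $m>1/2$ sufficed in Lemma 2.3. Making the large-$\xi$ integral converge with the correct $t$-dependence uniformly is the delicate point, and as in Lemma 2.3 I would lean on the sharp kernel decay estimates already established in \cite{MR2460931} under hypotheses \eqref{hyp1} and \eqref{hyp2}, so that the bulk of this work is quotable rather than recomputed here.
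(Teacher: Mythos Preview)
Your proposal is correct and follows exactly the paper's approach: the paper's own proof is a two-line argument that reduces by scaling to the estimate $\|(K_t)_x\|_{L^p(\rr)}\le Ct^{-\frac12(1-\frac1p)-\frac12}$ (``using the same arguments as in the previous lemma'') and then cites \cite{MR2460931} for that kernel bound. Your Fourier-side sketch of the kernel estimate gives more detail than the paper does, but the structure and the invoked ingredients are the same.
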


\begin{proof}
Using the same arguments as in the previous lemma
it is  sufficient to prove that $$\|(K_t)_x\|_{L^p(\rr)}\leq Ct^{-\frac{1}{2}(1-\frac{1}{p})-\frac{1}{2}}.$$
Previous results in \cite{MR2460931} guarantee the desired estimates for $K_t$ and the proof is finished.
\end{proof}

\textbf{Step II. Compactness in $C([t_1,t_2]), L^1_{loc}(\rr)$.}
%
%
%
Let us first recall the Aubin-Lions compactness criterion (see \cite{MR916688} for related results).
\begin{theorem}\label{aubin}
Let X, B and Y be Banach spaces satisfying $X\subset B\subset Y $ with compact embedding $X\subset B.$ Assume, for $1\leq p\leq \infty$ and $T>0$, that\\ 
1) $F$ is bouned in $L^p(0, T; X)$ \\
2)$\{(f_t):f\in\ F\}$ is bounded in $L^p(0, T; Y).$

Then $F$ is relatively compact in $L^p(0, T; B)$ (and in $C(0, T; B)$ if $p=\infty$).
\end{theorem}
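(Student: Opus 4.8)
The plan is to reduce the statement to the classical vector-valued Riesz--Fréchet--Kolmogorov compactness criterion in the Bochner space $L^p(0,T;B)$ and then to verify its two hypotheses by spending the two given bounds separately, bridging the gap between $X$, $B$ and $Y$ through an interpolation (Ehrling) inequality. Recall the criterion: a bounded subset $F\subset L^p(0,T;B)$, for $1\le p<\infty$, is relatively compact if and only if (i) the local averages $\{\frac{1}{t_2-t_1}\int_{t_1}^{t_2}f(s)\,ds : f\in F\}$ are relatively compact in $B$ for every $0<t_1<t_2<T$, and (ii) the time translates satisfy $\sup_{f\in F}\|\tau_h f-f\|_{L^p(0,T-h;B)}\to 0$ as $h\to 0$, where $(\tau_h f)(t)=f(t+h)$. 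I would establish (i) and (ii) in turn.

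For (i), I would observe that by Jensen's (or Hölder's) inequality the averages are bounded in $X$ uniformly over $f\in F$, using only hypothesis 1) that $F$ is bounded in $L^p(0,T;X)$. The compact embedding $X\subset B$ then immediately makes this family of averages relatively compact in $B$, which is exactly (i). This is the step where the $X$-regularity is consumed.

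For (ii) the key tool is the Ehrling inequality: since $X\subset B$ is compact and $B\subset Y$ is continuous, for every $\eta>0$ there is a constant $C_\eta$ with $\|w\|_B\le \eta\|w\|_X+C_\eta\|w\|_Y$ for all $w\in X$. This I would prove by contradiction, extracting from a normalized violating sequence a subsequence converging in $B$ by compactness while its $Y$-norm tends to zero, which forces the $B$-limit to vanish and contradicts normalization. Applying the inequality pointwise to $w=(\tau_h f)(t)-f(t)$ and integrating the $p$-th power in $t$ gives
\[
\|\tau_h f-f\|_{L^p(0,T-h;B)}\le \eta\,\|\tau_h f-f\|_{L^p(0,T-h;X)}+C_\eta\,\|\tau_h f-f\|_{L^p(0,T-h;Y)}.
\]
The first term is bounded by $2\eta\sup_{f\in F}\|f\|_{L^p(0,T;X)}$ via hypothesis 1). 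For the second, writing $f(t+h)-f(t)=\int_t^{t+h}f_t(s)\,ds$, estimating in $Y$, applying Hölder in the $s$-integral, and using Fubini together with hypothesis 2) yields $\|\tau_h f-f\|_{L^p(0,T-h;Y)}\le h\,\|f_t\|_{L^p(0,T;Y)}$, which tends to $0$ as $h\to 0$ uniformly over $F$. Choosing first $\eta$ small and then $h$ small makes the right-hand side uniformly small, establishing (ii).

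The main obstacle is the simultaneous control required in (ii): neither bound alone suffices, and the Ehrling inequality is precisely what couples the compact direction $X$, where $F$ is merely bounded, with the weak direction $Y$, where time-equicontinuity is cheap. Getting the two estimates to cooperate in the correct order of quantifiers, namely $\eta$ chosen before $h$ and uniformly in $f\in F$, is the heart of the argument. Finally, for the endpoint $p=\infty$ and the conclusion in $C([0,T];B)$, I would upgrade (i) to pointwise-in-$t$ relative compactness in $B$ and (ii) to uniform equicontinuity in $t$, and then invoke the Arzelà--Ascoli theorem for $B$-valued continuous functions.
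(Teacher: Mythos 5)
The paper does not actually prove Theorem \ref{aubin}: it is recalled as a known result, with a pointer to Simon's paper \cite{MR916688}, so there is no in-paper proof to compare yours against. Your argument is correct and is, in substance, the classical proof from that reference: you invoke Simon's translation-compactness criterion in $L^p(0,T;B)$ (the vector-valued Riesz--Fr\'echet--Kolmogorov theorem), spend hypothesis 1) plus the compact embedding $X\subset B$ on the relative compactness of the time averages, and spend hypothesis 2) on the uniform smallness of translates in $Y$, with Ehrling's interpolation inequality $\|w\|_B\le \eta\|w\|_X+C_\eta\|w\|_Y$ doing the coupling; the order of quantifiers ($\eta$ before $h$, uniformly in $f\in F$) is handled correctly, and the $p=\infty$ case via equicontinuity and vector-valued Arzel\`a--Ascoli is the right upgrade. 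Two standard details are worth making explicit if you write this out in full: the identity $f(t+h)-f(t)=\int_t^{t+h}f_t(s)\,ds$ requires the fact that a function in $L^p(0,T;Y)$ whose distributional derivative lies in $L^p(0,T;Y)$ has an absolutely continuous representative with values in $Y$ for which the fundamental theorem of calculus holds; and for $p=\infty$ one must pass to this continuous representative before asserting pointwise relative compactness of $\{f(t):f\in F\}$ in $B$ for \emph{every} $t$ (a.e.\ membership in a bounded set of $X$ plus $B$-continuity gives $f(t)$ in the $B$-closure of that set, which is compact). Neither point is a gap, only bookkeeping.
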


The following lemma gives the compactness of $\{v_\lambda\}_{\lambda>0}$ in  $C([t_1,t_2]), L^1_{loc}(\rr)$.

\begin{lemma}\label{lemma3}

For any  $0<t_1<t_2<\infty$ and for each $R>0$ the set
$$\{v_\lambda(. ,t)\}_{\lambda>0}\subseteq {C([t_1,t_2];L^1(-R,R))}$$  is relatively compact.
\end{lemma}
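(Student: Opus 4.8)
The plan is to apply the Aubin–Lions compactness criterion (Theorem \ref{aubin}) with the triple of spaces chosen so that the two hypotheses are met by the estimates already proved in Step I. The natural choice is $X = W^{1,1}(-R,R)$, $B = L^1(-R,R)$, and $Y = W^{-1,1}(-R,R)$ (or any space large enough to accommodate the time derivative $(v_\lambda)_t$). The compact embedding $X \subset B$ is the Rellich–Kondrachov theorem on the bounded interval $(-R,R)$, which is the reason we localize to a finite window before extending the convergence globally in Step II.

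For hypothesis (1), I would verify that $\{v_\lambda\}$ is bounded in $L^\infty(t_1,t_2; W^{1,1}(-R,R))$. On the fixed interval $[t_1,t_2]$ with $t_1>0$, the mass bound \eqref{mass} gives a uniform $L^1(\rr)$ bound on $v_\lambda$, and hence on $v_\lambda$ restricted to $(-R,R)$. For the spatial derivative, Lemma \ref{lemma2} provides $\|(v_\lambda(t))_x\|_{L^p(\rr)} \leq C t^{-\frac12(1-\frac1p)-\frac12}\|u_0\|_{L^1}$; taking $p=2$, say, bounds $(v_\lambda)_x$ in $L^2(\rr)$ uniformly for $t\in[t_1,t_2]$, and on the bounded interval $(-R,R)$ a Hölder inequality converts this into a uniform $L^1(-R,R)$ bound. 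Combining these gives a uniform bound in $X$ for every $t\in[t_1,t_2]$, so $\{v_\lambda\}$ is bounded in $L^\infty(t_1,t_2;X)$ as required.

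For hypothesis (2), I would estimate the time derivative using the evolution equation \eqref{v_t}, namely $(v_\lambda)_t = \lambda^2 e^{-\lambda^2 t}(J_\lambda * u_{0\lambda}) + \lambda^2(J_\lambda * v_\lambda - v_\lambda)$. The first term is controlled by the exponential prefactor $\lambda^2 e^{-\lambda^2 t}$, which is uniformly bounded for $t \geq t_1 > 0$ (since $s\mapsto \lambda^2 e^{-\lambda^2 t_1}$ is bounded in $\lambda$), together with Young's inequality for the convolution. The second term, $\lambda^2(J_\lambda * v_\lambda - v_\lambda)$, is the delicate one: the prefactor $\lambda^2$ blows up, so one cannot simply bound it in $L^1$. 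The remedy is to exploit the structure of the nonlocal operator and pass the difference onto the test function, treating this term as a distribution in a negative-order space $Y$. Writing the nonlocal operator in weak form and using the second-moment cancellation of $J$ (the quantity $A$), the factor $\lambda^2$ is absorbed by two differences/derivatives of the test function, yielding a uniform bound in $L^\infty(t_1,t_2;Y)$ with $Y = W^{-k,1}$ for $k$ large enough.

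The main obstacle will be precisely this control of the second term in $(v_\lambda)_t$: one must show that $\lambda^2(J_\lambda * v_\lambda - v_\lambda)$ is uniformly bounded in a negative Sobolev space despite the diverging prefactor. The key is that $J_\lambda * \phi - \phi$ behaves, when tested against a smooth compactly supported function, like $\lambda^{-2}$ times a second derivative (this is the discrete-to-continuum mechanism that produces the Laplacian in the limit, with diffusivity $A$), so the $\lambda^2$ is exactly cancelled and the resulting action is bounded by $\|v_\lambda\|_{L^1}\,\|\phi''\|_{L^\infty}$. Once both hypotheses hold, Aubin–Lions yields relative compactness of $\{v_\lambda\}$ in $C([t_1,t_2]; L^1(-R,R))$, which is the statement of the lemma.
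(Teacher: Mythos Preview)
Your approach is correct and follows the same Aubin--Lions skeleton as the paper, but the implementation differs in two respects worth noting. First, the paper works with the Hilbert triple $X=H^1(-R,R)$, $B=L^2(-R,R)$, $Y=H^{-1}(-R,R)$ and only passes to $L^1(-R,R)$ at the end via the embedding $L^2\hookrightarrow L^1$ on bounded intervals; you instead work directly with $L^1$-type spaces, using H\"older on $(-R,R)$ to convert the $L^2$ gradient bound from Lemma~\ref{lemma2} into a $W^{1,1}$ bound. Second, and more substantively, for the nonlocal part $\lambda^2(J_\lambda*v_\lambda-v_\lambda)$ of $(v_\lambda)_t$ the paper symmetrizes via identity~\eqref{id1}, applies Cauchy--Schwarz on the double integral, and invokes Lemma~\ref{lemma4} to bound each factor by an $H^1$ seminorm, landing in $H^{-1}$; you instead transfer the full operator onto the test function and use the second-moment estimate (the content of Lemma~\ref{lemma5}) to get a bound by $\|v_\lambda\|_{L^1}\|\phi''\|_{L^\infty}$. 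Your route is slightly more economical here since it needs only the $L^1$ bound on $v_\lambda$ rather than the gradient bound, at the price of landing in a larger negative-order space (the dual of $C^2_0$, or $H^{-3}$ if you want a Hilbert setting; your notation $W^{-k,1}$ is nonstandard for this, so pick a concrete $Y$ with $L^1(-R,R)\hookrightarrow Y$). Either choice closes the argument.
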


\begin{proof} We first prove the compactness in $C([t_1,t_2];L^2(-R,R))$ since we need estimates for $v_\lambda$ in the $L^2(\rr)$-norm and these are given by Lemma \ref{lemma2} and Lemma \ref{lemma3}. Using estimates on the $L^1$-norm of $v_\lambda$ will require more assumptions on $\hat J$ in these lemmas.

We apply the above compactness principle with $p=\infty$ and the following spaces
$X=H^1(-R,R),$  $B=L^2(-R,R)$ and $Y= H^{-1}(-R,R)$.
We prove for some $M=M(t_1,R)$ that the following estimates hold uniformly with respect to the parameter $\lambda$:
 \begin{equation}\label{est.1}
\|v_\lambda\|_{L^\infty([t_1,t_2];H^1(-R,R))}\leq M
\end{equation}
and
\begin{equation}\label{est.2}
\|(v_\lambda)_t\|_{L^\infty([t_1,t_2];H^{-1}(-R,R))}\leq M.
\end{equation}
%
%
Using Lemma \ref{lemma1} and Lemma \ref{lemma2} we immediately obtain estimate \eqref{est.1}.
%
%
%
 
We now prove the second estimate \eqref{est.2}. 
For a function $\Phi\in C^\infty_c(-R,R)$ we set  $\overline{\Phi}$  its extension as zero outside $(-R,R)$.
Using that $v_\lambda$ satisfies equation \eqref{v_t} we get:
\begin{align*}
{\la(v_\lambda)_t,\Phi}\ra_{H^{-1},H^1_0(-R,R)}&=\int^R_{-R}(v_\lambda)_t\Phi(x)dx
=\int_{\rr}{(v_\lambda)_t\overline{\Phi}(x)dx}\\
&=\int_{\rr}{[\lambda^2e^{-\lambda^2t}(J_\lambda*u_{0\lambda})(x)+\lambda^2(J_\lambda*v_\lambda-v_\lambda)(x,t)]\overline{\Phi} (x)dx}\\
&=
\lambda^2e^{-\lambda^2t}\int_{\rr}(J_\lambda*u_{0\lambda})(x)\overline{\Phi}(x)dx+\lambda^2\int_{\rr}(J_\lambda*v_\lambda-v_\lambda)(x,t)\overline{\Phi}(x)dx\\
&=B_1+B_2.
\end{align*}
Using  H\"older and Young's inequalities, we obtain the following bounds for $B_1$:
\begin{align*}
|B_1|&\leq\lambda^2e^{-\lambda^2t}\|J_\lambda*u_{0\lambda}\|_{L^2(\rr)}\|\overline{\Phi}\|_{L^2(\rr)}\leq
\lambda^2e^{-\lambda^2t}\|J_\lambda\|_{L^2(\rr)}\|u_{0\lambda}\|_{L^1(\rr)}\|\overline{\Phi}\|_{L^2(\rr)}\\
&\leq
\lambda^{3-\frac{1}{2}}e^{-\lambda^2t_1}\|J\|_{L^2(\rr)}\|u_0\|_{L^1(\rr)}\|\overline{\Phi}\|_{L^2(\rr)}\leq M\|\overline{\Phi}\|_{L^2(\rr)}=M\|\Phi\|_{L^2(-R,R)}.
\end{align*}
To obtain bounds for $B_2$, we use Cauchy's inequality, the identity 
\begin{align}\label{id1}
\int_{\rr}\int_{\rr}&{J(x-y)(\phi(y)-\phi(x))\psi(x)}dxdy\\=
\nonumber &-\frac{1}{2}\int_{\rr}\int_{\rr}{J(x-y)(\phi(y)-\phi(x))(\psi(y)-\psi(x))}dxdy.
\end{align}
and the following Lemma.
\begin{lemma}\label{lemma4}
There exists a positive constant $C(J)=\int_\rr J(z)z^2dz$ such that 
\begin{equation}\label{13}
\lambda^2\int_\rr\int_\rr{J_\lambda(x-y){(u(y,t)-u(x,t))}^2}dydx\leq C(J)\int_\rr|u_x(x)|^2dx
\end{equation}
holds for all $u\in H^1(\rr)$ and $\lambda>0.$
\end{lemma}

It follows that
\begin{align*}
B_2=
-\frac{\lambda^2}{2}\int_{\rr}\int_{\rr}J_\lambda(x-y)(v_\lambda(y,t)-v_\lambda(x,t))(\overline{\Phi}(y)-\overline{\Phi}(x))dydx.
\end{align*}
Applying Lemma \ref{lemma4} we get
\begin{align*}
|B_2|&\leq
\frac{1}{2}{\left(\lambda^2\int_{\rr}\int_{\rr}(J_\lambda(x-y){(v_\lambda(y,t)-v_\lambda(x,t))}^2dydx\right)}^\frac{1}{2}\\
&\qquad \times  {\left(\lambda^2\int_{\rr}\int_{\rr}(J_\lambda(x-y){(\overline{\Phi}(y)-\overline{\Phi}(x))}^2dydx\right)}^\frac{1}{2}\\
&\leq
C\|(v_\lambda)_x\|_{L^2(\rr)}\|\overline{\Phi}_x\|_{L^2(\rr)}.
\end{align*}
Applying Lemma \ref{lemma2} we have
\begin{align*}
B_2&\leq 
C(J)\|(v_\lambda)_x\|_{L^2(\rr)}\|{\Phi}_x\|_{L^2((-R,R))}
\leq
C(J, t_1)\|u_0\|_{L^1(\rr)}\|\Phi\|_{L^2((-R,R))}.
\end{align*}
The above estimates on $B_1$ and $B_2$ show that estimate \eqref{est.2} also holds.
By Theorem \ref{aubin} we obtain that $\{v_\lambda\}$ is relatively compact in $C([t_1,t_2];L^2(-R,R))$, then in $C([t_1,t_2];L^2(-R,R))$ 
 and the proof of Lemma \ref{lemma3} is now complete.
\end{proof}

\begin{proof}[Proof of Lemma \ref{lemma4}]
 To prove  inequality \eqref{13} we use Cauchy's inequality and Fubini's theorem.
 Let us denote by $I_\lambda$ the right hand side in \eqref{13}. 
It follows that
\begin{align*}
I_\lambda&=\lambda\int_\rr\int_\rr{J(x-y){\left(u\left(\frac{y}{\lambda}\right)-u\left(\frac{x}{\lambda}\right)\right)}^2}dydx\\ 
&=
\frac 1\lambda\int_\rr\int_\rr{J(x-y){(x-y)^2}\left[\int^1_0{u_x(\frac{x}{\lambda}+\theta\frac{y-x}{\lambda})}d\theta\right]^2}dydx\\
&\leq
\frac{1}{\lambda}\int_\rr\int_\rr{J(x-y)(y-x)^2\int^1_0{\left[u_x(\frac{x}{\lambda}+\theta\frac{y-x}{\lambda})\right]^2}d\theta}dydx\\
&=
\frac{1}{\lambda}\int_\rr{J(z)|z|^2\int^1_0\int_\rr\left[u_x(\frac{z+y}{\lambda}-\theta\frac{z}{\lambda})\right]^2dyd\theta}dz\\
&=
\int_\rr{J(z)|z|^2dz\int_\rr|u_x(x)|^2dx}
\end{align*}
and the proof of Lemma \eqref{lemma4} is finished.
\end{proof}

\textbf{Step II. Compactness in $C([t_1,t_2], L^1(\rr))$.} The previous step gives us that for any $R>0$ the family $\{v_\lambda\}$ is relatively compact in 
$C([t_1,t_2], L^1(-R,R))$. Using a standard diagonal argument the compactness in $C([t_1,t_2], L^1(\rr))$ is reduced to the fact that 
\begin{equation}\label{est.23}
\sup_{t\in[t_1,t_2 ]} \|v_\lambda (t)\|_{L^1(|x|>R)} \rightarrow 0,\quad \text{as}\ R\rightarrow \infty,
\end{equation}
uniformly on $\lambda\geq 1$. This follows from the following Lemma.

\begin{lemma}\label{est.2r}
There exists a constant $C=C(J,\|u_0\|_{L^1(\rr)})$ such that 
\begin{equation}\label{int.2r}
\int _{|x|>2R} v_\lambda (t,x)dx\leq \int _{|x|>R}(J\ast u_0)(x)dx+C(\frac{t}{R^2}+\frac {t^{1/2}}R)
\end{equation}
 holds for any $t>0$, $R>0$, uniformly for $\lambda\geq 1$.
\end{lemma}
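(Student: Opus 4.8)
The plan is to test equation \eqref{v_t} against a smooth cut-off that vanishes near the origin, and to control the resulting nonlocal operator acting on the cut-off by a Taylor expansion together with the finiteness of the second moment of $J$. Fix $\psi\in C^\infty(\rr)$ with $0\le\psi\le 1$, $\psi(x)=0$ for $|x|\le 1$ and $\psi(x)=1$ for $|x|\ge 2$, and set $\psi_R(x)=\psi(x/R)$. Since $v_\lambda\ge 0$ and $\psi_R\equiv 1$ on $\{|x|>2R\}$,
$$\int_{|x|>2R}v_\lambda(t,x)\,dx\le\int_\rr v_\lambda(t,x)\,\psi_R(x)\,dx .$$
Multiplying \eqref{v_t} by $\psi_R$, integrating in $x$, and recalling $v_\lambda(\cdot,0)=0$, it suffices to integrate in time the identity
$$\frac{d}{dt}\int_\rr v_\lambda\psi_R\,dx=\underbrace{\lambda^2 e^{-\lambda^2 t}\!\int_\rr (J_\lambda*u_{0\lambda})\psi_R\,dx}_{=:B_1}+\underbrace{\lambda^2\!\int_\rr (J_\lambda*v_\lambda-v_\lambda)\psi_R\,dx}_{=:B_2}.$$

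For $B_2$ I would transfer the operator onto the cut-off: by Fubini and the evenness of $J_\lambda$, one has $\int_\rr(J_\lambda*v_\lambda-v_\lambda)\psi_R\,dx=\int_\rr v_\lambda\,(J_\lambda*\psi_R-\psi_R)\,dx$. The crux is the pointwise, $\lambda$-independent bound $\lambda^2\,|(J_\lambda*\psi_R-\psi_R)(x)|\le C(J)/R^2$. This follows by the change of variables $w=\lambda z$, which gives $\lambda^2(J_\lambda*\psi_R-\psi_R)(x)=\lambda^2\int_\rr J(w)\big[\psi_R(x-w/\lambda)-\psi_R(x)\big]\,dw$, Taylor expanding $\psi_R$ to second order, cancelling the first-order term via $\int_\rr J(w)\,w\,dw=0$, and using $\|\psi_R''\|_{L^\infty}\le C/R^2$ together with $\int_\rr J(w)w^2\,dw<\infty$; the two factors of $\lambda$ cancel against the $\lambda^{-2}$ produced by the two derivatives. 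With the mass bound \eqref{mass} this yields $|B_2|\le C(J)\,\|u_0\|_{L^1(\rr)}/R^2$.

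For $B_1$ I would use the scaling identity $(J_\lambda*u_{0\lambda})(x)=\lambda\,(J*u_0)(\lambda x)$; after $w=\lambda x$ and $\psi(s)\le\mathbf 1_{\{|s|>1\}}$,
$$\int_\rr (J_\lambda*u_{0\lambda})\psi_R\,dx=\int_\rr (J*u_0)(w)\,\psi\!\Big(\tfrac{w}{\lambda R}\Big)\,dw\le\int_{|w|>\lambda R}(J*u_0)(w)\,dw .$$
Integrating over $(0,t)$, the factor $\int_0^t\lambda^2 e^{-\lambda^2 s}\,ds=1-e^{-\lambda^2 t}\le 1$ removes the time dependence of $B_1$, while $B_2$ contributes $C(J)\|u_0\|_{L^1(\rr)}\,t/R^2$; since $\lambda\ge 1$ and $J*u_0\ge 0$, the set $\{|w|>\lambda R\}$ lies in $\{|w|>R\}$, producing the term $\int_{|x|>R}(J*u_0)\,dx$. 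This already gives $\int_{|x|>R}(J*u_0)+C\,t/R^2$, and since the extra summand $C\,t^{1/2}/R$ in \eqref{int.2r} is nonnegative, the stated inequality follows (the $t^{1/2}/R$ term may also be produced directly by a cruder splitting of the $J$-integral, or by isolating the boundary layer $R<|x|<2R$, should one prefer to avoid the sharp second-moment cancellation).

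\emph{The main obstacle} is the uniform-in-$\lambda$ estimate of the nonlocal Laplacian applied to the cut-off, namely $\lambda^2\|J_\lambda*\psi_R-\psi_R\|_{L^\infty}\le C/R^2$: here the scaling of $J_\lambda$, the vanishing first moment (evenness of $J$) and the finite second moment must conspire so that all powers of $\lambda$ cancel, and this is precisely the step where the hypothesis $J\in L^1(1+|x|^2)$ is used. Everything else—the transfer of the operator, the elementary bound on the source term through the initial-data tail, and the time integration—is routine once this estimate is in hand.
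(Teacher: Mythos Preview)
Your proposal is correct and follows essentially the same route as the paper: the same cut-off $\psi_R$, the same splitting into the source term $B_1$ and the nonlocal term $B_2$, transfer of the operator onto $\psi_R$ followed by the Taylor/second-moment bound $\lambda^2\|J_\lambda*\psi_R-\psi_R\|_{L^\infty}\le C/R^2$ (stated in the paper as Lemma~\ref{lemma5}), and the same scaling identity for $B_1$. Your observation that the argument actually yields only $\int_{|x|>R}(J*u_0)+C\,t/R^2$, with the $t^{1/2}/R$ term being a harmless nonnegative add-on, matches what the paper's own proof produces.
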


\begin{proof}
Let  $\Psi\in C^\infty_c(\rr)$ be a nonnegative function satisfying
$$
\Psi(x)= \left\{ \begin{array}{ll}0, \qquad& |x|<1, \\
1,  \qquad & |x|>2. \end{array} \right.
$$
Put  $\Psi_R(x)=\Psi(\frac{x}{R})$ for every $R>0.$
Multiplying equation \eqref{v_t}   by $\Psi_R(x)$ and integrating in space and time we obtain:
\begin{align*}
\int_\rr v_\lambda(x,t)\Psi_R(x)dx&=\int^t_0\int_\rr\Big(\lambda^2e^{-\lambda^2s}(J_\lambda*u_{0\lambda})(x)+\lambda^2(J_\lambda*v_\lambda-v_\lambda)(x,s)\Big)\Psi_R(x)dxds\\
&=B_1+B_2.
\end{align*}
Using  identity \eqref{id1}  we obtain:
\begin{align*}
B_2&= 
-\frac{\lambda^2}{2}\int^t_0\int_\rr \int_\rr J_\lambda(x-y)[v_\lambda(y,t)-v_\lambda(x,t)][\Psi_R(y)-\Psi_R(x)]dxds\\ 
&=
\lambda^2\int^t_0\int_\rr(J_\lambda*\Psi_R-\Psi_R)(x)v_\lambda(x,s)dxds.
\end{align*}
We now need the following result.
\begin{lemma}\label{lemma5}
There exists a positive constant $C(J)$ such that 
$$\|\lambda^2(J_\lambda*\psi-\psi)\|_{L^\infty(\rr)}\leq C(J)\|\psi_{xx}\|_{L^\infty(\rr)}$$
holds for all $\lambda>0$ and $\psi\in C^2_c(\rr).$
\end{lemma}

Applying this lemma and using that the mass of $v_\lambda(t)$ is bounded by the mass of $u_0$ obtained in \eqref{mass}, we get
\begin{align*}
|B_2|
&\leq
\int^t_0\lambda^2\|J_\lambda*\Psi_R-\Psi_R\|_{L^\infty(\rr)}\|v_\lambda(s)\|_{L^1(\rr)}ds\\
&\leq C(J)t
\|u_0\|_{L^1(\rr)}\|(\Psi_R)_{xx}\|_{L^\infty(\rr)}\leq\frac{C(J)t}{R^2}\|u_0\|_{L^1(\rr)}.
\end{align*}

Now we analyze  $B_1$. Observe that
\begin{align*}
B_1&=
\lambda^2(1-e^{-\lambda^2t}) \int_\rr\int_\rr J(\lambda (x-y))u_0(\lambda y)\Psi_R(x)dydx\\
&=
(1-e^{-\lambda^2t})\int_\rr (J*u_0)(x)\Psi(\frac{x}{\lambda R})dx\\
&\leq
(1-e^{-\lambda^2t})\int_{|x|>R\lambda}|(J*u_0)(x)|dx
\leq
\int_{|x|>R}(J*u_0)(x)dx.
\end{align*}
%
The proof of Lemma \ref{est.2r} is now complete.
\end{proof}

\begin{proof}[Proof of Lemma \ref{lemma5}]
Using Taylor's formula we have for any $x$ and $y$ that
$$|\psi (x)-\psi(y)-(y-x)\psi_x(x)|\leq \frac{(y-x)^2}2\|\psi_{xx}\|_{L^\infty(\rr)}.$$
Taking into account the symmetry of $J$, we obtain:
\begin{align*}
\lambda^2\Big|\int_\rr J_\lambda(x-y)&[\psi(y)-\psi(x)]dy\Big|\\
&\leq 
\lambda^2 \Big| \int_\rr J_\lambda(x-y)(y-x)\psi '(x)dx\Big| +\frac{\lambda^2}2 \|\psi_{xx}\|_{L^\infty(\rr)}\int_\rr J_\lambda(x-y){(y-x)^2}dy\\
&=
\frac {\|\psi_{xx}\|_{L^\infty(\rr)}}2\int_\rr J(z){z^2}dz
\end{align*}
and the desired result follows.
\end{proof}

\textbf{Step III. Identification of the limit.} 
  By Step II, for any $0<t_1<t_2<\infty$, the family $\{v_\lambda\}_{\lambda>0}$ is relatively compact in $C([t_1,t_2],L^1(\rr))$. Thus, there exists a subsequence 
  $\{v_\lambda\}_{\lambda>0}$ (not relabeled) and a function $v\in C((0,\infty),L^1(\rr))$ such that 
  \begin{equation}\label{limit}
v_\lambda \rightarrow \ov \quad \text{in}\ C([t_1,t_2], L^1(\rr)) \quad \text{as}\ \lambda \rightarrow \infty.
\end{equation}
Moreover, for any $t>0$, since $\|v_\lambda(t)\|_{L^1(\rr)}\leq \|u_0\|_{L^1(\rr)}$ we also have this property for function $\ov$: $\|\ov(t)\|_{L^1(\rr)}\leq
\|u_0\|_{L^1(\rr)}$.

We multiply equation \eqref{v_t} with a function $\varphi\in C^\infty_c([0,T]\times\rr)$. Integrating  equation \eqref{3} over $[0,T]\times\rr$  we get:
\begin{align*}
\int^T_0\int_\rr{(v_\lambda)_t(x,t)\varphi (x,t)dxdt}&= \int^T_0\int_\rr \lambda^2e^{-\lambda^2t}(J_\lambda*u_{0\lambda})(x)\varphi(x,t)dxdt\\
&+
\int^T_0\int_\rr \lambda^2(J_\lambda*v_\lambda-v_\lambda)\varphi(x,t)dxdt.
\end{align*}
Integrating by parts with respect to variables $t$ and $x$ and using that $v_\lambda(x,0)=0$, $\varphi$ has compact support and  identity \eqref{id1}, we have:
\begin{align*}
-\int^T_0\int_\rr{v_\lambda(x,t)\varphi_t (x,t)dxdt}&= \int^T_0\lambda^2e^{-\lambda^2t}\int_\rr(J_\lambda*u_{0\lambda})(x)\varphi(x,t)dxdt\\
&+
\int^T_0\int_\rr \lambda^2(J_\lambda*\varphi-\varphi)v_\lambda(x,t)dxdt.
\end{align*}
We will prove later that as $\lambda \rightarrow \infty$ the following convergences hold:
\begin{equation}\label{16}
\int^T_0\int_\rr{v_\lambda(x,t)\varphi_t(x,t)}dxdt\to \int^T_0\int_\rr{\ov(x,t)\varphi_t(x,t)}dxdt,
\end{equation}
\begin{equation}\label{17}
\int^T_0\int_\rr{\lambda^2(J_\lambda*\varphi-\varphi)(x,t)v_\lambda(x,t)}dxdt\to \int^T_0\int_\rr{\ov(x,t)A\varphi_{xx}(x,t)}dxdt 
\end{equation}
and 
\begin{equation}\label{18}
\int^T_0\int_\rr{\lambda^2e^{-\lambda^2t}(J_\lambda*u_{0\lambda})(x)\varphi(x,t)}dxdt\to M\varphi(0,0),
\end{equation} 
where $A=\frac{1}{2}\int_\rr J(z)z^2dz$ and  $M=\int_\rr u_0(x)dx.$

The above results show that $v\in C((0,\infty), L^1(\rr))$ satisfies
\[
-\int _0^T\int _\rr \ov(x,t)\varphi_t(x,t)dxdt=M\varphi(0,0)+A\int _0^T \int _{\rr} \ov(x,t)\varphi_{xx}(x,t)dxdt.
\]
Hence $v$ is a solution of the heat equation
\begin{equation}
\label{heat}
\left\{
\begin{array}{ll}
\ov_t(x,t) = A\ov_{xx}(x,t) \, dy,& x \in \rr, \ t>0, \\[10pt]
\ov(0)=M\delta_0.& 
\end{array}
\right.
\end{equation}
Since this equation has a unique solution $\ov(t)=MG_{At}$, $G_t$ being the heat kernel, the whole family $\{v_\lambda\}_{\lambda>0}$ converges  to $\ov$ not only to a subsequnce. Hence 
\[
\lim _{\lambda \rightarrow \infty}\|v_\lambda (1) -MG_{A}\|_{L^1(\rr)}=0
\]
and by Lemma \ref{teo2}, $v$ the solution of system \eqref{12} satisfies
\[
\lim _{t \rightarrow \infty}\|v (t) -MG_{At}\|_{L^1(\rr)}=0.
\]
This immediately implies that $u$, the solution of system \eqref{11} satisfies 
\[
\lim _{t \rightarrow \infty}\|u(t)-MG_{At}\|_{L^1(\rr)} =0.
\]
The case $p\geq 1$ easily follows since by Step I, 
$$\|u(t)\|_{L^\infty(\rr)}\leq C(p,\|\varphi\|_{L^1(\rr)},\|\varphi\|_{L^\infty(\rr)})t^{-\frac 12}$$
and then
\[
\|u(t)-MG_{At}\|_{L^p(\rr)} \leq  \|u(t)-MG_{At}\|_{L^1(\rr)}^{1/p} (\|u(t)\|_{L^\infty(\rr)}+M\|G_{At}\|_{L^\infty(\rr)})^{1-\frac 1p}=
o(t^{-\frac 12(1-\frac 1p)}).
\]

To finish the proof of Theorem \eqref{teo1} 
 it remains to prove  \eqref{16}, \eqref{17}, \eqref{18}.
Before starting the proof of we  observe that
\begin{equation}\label{liml1}
\lim _{\lambda\rightarrow\infty}\int _0^T \|v_\lambda(t)-\ov(t)\|_{L^1(\rr)}dt=0.
\end{equation}
Indeed, for any $\eps>0$ we have
\begin{align*}
\int _0^T \|v_\lambda(t)-\ov(t)\|_{L^1(\rr)}dt&=\int _0^\eps \|v_\lambda(t)-\ov(t)\|_{L^1(\rr)}dt+\int _\eps^T \|v_\lambda(t)-\ov(t)\|_{L^1(\rr)}dt\\
&\leq 2\eps \|u_0\|_{L^1(\rr)}+ \int _\eps^T \|v_\lambda(t)-\ov(t)\|_{L^1(\rr)}dt.
\end{align*}
Since ${v_\lambda}$ is relatively compact in $C([\eps,T],L^1(\rr))$ we obtain that \eqref{liml1} holds.

Let us now prove \eqref{16}. We have
\begin{align*}
\left |\int^T_0\int_\rr{ (v_\lambda(x,t)-\ov(x,t))\varphi_t(x,t)}dxdt\right | &\leq
\int^T_0\|(v_\lambda(t)-\ov(t)\|_{L^1(\rr)}\|\varphi_t(t)\|_{L^\infty(\rr)}dt \\
&\leq
C(\varphi)\int _0^T \|v_\lambda(t)-\ov(t)\|_{L^1(\rr)}dt,
 \end{align*}
and \eqref{liml1} shows that \eqref{16} holds.

In the case of \eqref{17} we have
\begin{align*}
&\left |\int^T_0\int_\rr{\lambda^2(J_\lambda*\varphi-\varphi)(x,t)v_\lambda(x,t)}dxdt- \int^T_0\int_\rr \ov(x,t)A\varphi_{xx}(x,t)dxdt \right |\\
&\leq
\left |\int^T_0\int_\rr \lambda^2(J_\lambda*\varphi-\varphi)(x,t)(v_\lambda(x,t) -\ov(x,t))dxdt\right |\\
&\quad +
\left |\int^T_0\int_\rr \ov(x,t)  \Big( \lambda^2(J_\lambda*\varphi -\varphi)(x,t) - A\varphi_{xx}(x,t)\Big)dxdt  \right|\\
&=
A_\lambda+B_\lambda.
\end{align*}
For the first term, we have
\begin{align*}
A_\lambda&\leq
\int^T_0\int_\rr\left|\lambda^2(J_\lambda*\varphi-\varphi)(x,t)\right| \left|v_\lambda(x,t)-v(x,t)\right|dxdt\\
&\leq
\int^T_0\|\lambda^2(J_\lambda*\varphi-\varphi)(t)\|_{L^\infty(\rr)}\|v_\lambda(t)-v(t)\|_{L^1(\rr)}dt.
\end{align*}
Using Lemma \ref{lemma5} and \eqref{liml1} we obtain that $A_\lambda \rightarrow 0$ as $\lambda\rightarrow \infty$.

For the second term, $B_\lambda$, we obtain:
\begin{align*}
B_\lambda&=\int^T_0\int_\rr\left|v(x,t)\right| \left|\lambda^2(J_\lambda*\varphi-\varphi)(x,t)-A\varphi_{xx}(x,t)\right|dxdt\\
&=
\int^T_0\int_\rr\left|v(x,t)\right| \left|\lambda^3\int_\rr J(\lambda(x-y))(\varphi(y,t)-\varphi(x,t))dy-A\varphi_{xx}(x,t)\right|dxdt.
\end{align*}
Since $v$ belongs to $L^1((0,T)\times \rr)$ it is sufficient to prove that the second term in the last integral goes to zero.
For that let us observe that
\begin{align*}
\lambda^3\int_\rr &J(\lambda(x-y))(\varphi(y,t)-\varphi(x,t))dy= \lambda ^2\int _{\rr}J(z)\big(\varphi(x-\frac z\lambda) -\varphi(x)\big)dz\\
&=\lambda^2 \int _{\rr}J(z)\Big[ -\frac z\lambda \varphi_x(x) +\frac 1{\lambda^2}\int _0^1 (1-s)\varphi_{xx}(x-\frac {sz}{\lambda})z^2ds \Big]\\
&=-\frac {\varphi_x(x)}\lambda\int _{\rr}J(z)zdz+\int _{\rr}J(z)z^2 \int _0^1(1-s)\varphi_{xx}(x-\frac {sz}\lambda)ds\\
&=\int _{\rr}J(z)z^2 \int _0^1(1-s)\varphi_{xx}(x-\frac {sz}\lambda)ds\rightarrow A \varphi_{xx}(x) \quad \text{as}\, \lambda\rightarrow \infty.
\end{align*}
Using the Lebesgue dominated convergence theorem  we obtain that $B_\lambda $ goes to zero as $\lambda\rightarrow \infty$.

Before entering in the proof of \eqref{18} let us remark that
\begin{align}\label{M}
\int^\infty_0e^{-t}\int_\rr(J*u_0)(x)dxdt&=\int _{\rr}u_0(x)dx=M.
\end{align}
Since $\varphi$ has compact support we have that
\begin{equation}\label{20}
\left|\varphi\left(\frac{x}{\lambda},\frac{t}{\lambda^2}\right)-\varphi(0,0)\right|\leq \left(\frac{|x|}{\lambda}+\frac{t}{\lambda^2}\right)\|\nabla\varphi\|_{L^\infty(\rr^2)}
\leq
\frac{C(\varphi)}{\lambda}
\end{equation}
Using \eqref{M} and \eqref{20} we have that when $\lambda\rightarrow \infty$
\begin{align*}
\Big|\int^\infty_0\int_\rr   \lambda^2 & e^{-\lambda^2t}(J_\lambda*u_{0\lambda})(x)\varphi(x,t)dxdt-M\varphi(0,0) \Big|\\
&=
\left|\int^\infty_0\int_\rr{\lambda^2e^{-\lambda^2t}\lambda(J*u_0)(\lambda x)\varphi(x,t)}dxdt-M\varphi(0,0)\right|\\
&\leq
\int^\infty_0e^{-t}\int_\rr(J*u_0)(x)\left|\varphi(\frac{x}{\lambda},\frac{t}{\lambda^2})-\varphi(0,0) \right|dxdt\\
&\leq
\frac{C(\varphi)}{\lambda}\int^\infty_0e^{-t}\int_\rr(J*u_0)(x)dxdt=
\frac{C(\varphi)}{\lambda} M\to 0.
\end{align*}
The proof  of \eqref{16}, \eqref{17}, \eqref{18}  is now  complete.

\medskip

 {\bf
 Acknowledgements.}

The  author was  supported by Grant PN-II-ID-PCE-2012-4-0021 "Variable Exponent Analysis: Partial Differential Equations and Calculus of Variations" of the Romanian National Authority for Scientific Research, CNCS -- UEFISCDI and by a doctoral fellowship of IMAR. Parts of this paper have been done during the visit of the author at BCAM-Basque Center for Applied Mathematics, Bilbao, Spain. The author thanks the center for hospitality and support.  

%
%

%
%
\bibliographystyle{plain}
\bibliography{biblio}

\end{document}